\documentclass[12pt]{article}    
\usepackage[utf8]{inputenc}
\usepackage[T1]{fontenc}
\usepackage{hyperref}
\usepackage{graphicx}
\usepackage{amsmath}
\usepackage{mathtools}
\usepackage{amsfonts}
\usepackage{amssymb}

\begin{document}
\newtheorem{definition}{Definition}
\newtheorem{theorem}{Theorem}
\newtheorem{example}{Example}
\newtheorem{corollary}{Corollary}
\newtheorem{lemma}{Lemma}
\newtheorem{proposition}{Proposition}
\newtheorem{remark}{Remark}
\newenvironment{proof}{{\bf Proof:\ \ }}{\qed}
\newcommand{\qed}{\rule{0.5em}{1.5ex}}
\newcommand{\bfg}[1]{\mbox{\boldmath $#1$\unboldmath}}

\numberwithin{proposition}{section}
\numberwithin{equation}{section} \numberwithin{theorem}{section}
\numberwithin{example}{section} \numberwithin{definition}{section}
\numberwithin{lemma}{section} \numberwithin{remark}{section}

\begin{center}

\section*{Revisiting ``A universal model for the Lorenz curve with novel applications''}

\vskip 0.1in \bf Jos\'e Mar\'{\i}a Sarabia$\,^a$\footnote{Corresponding
author. E-mail address: sarabiaj@unican.es (JM Sarabia), jordav@unican.es (V. Jord\'a), tejeriam@unican.es (M. Tejer\'ia), emilio.gomez-deniz@ulpgc.es (E. G\'omez-D\'eniz)}, Vanesa Jord\'a$\,^a$, \\ Mercedes Tejer\'ia$\,^a$, Emilio G\'omez-D\'eniz$\,^b$
\vskip 0.1in

{\small\it $\,^a$Department of Economics and SANFI Institute,\\ University of Cantabria, Santander, Spain}\\

{\small\it $\,^b$Department of Quantitative Methods in Economics and TIDES Institute, University of Las Palmas de Gran Canaria, Spain }\\[-0.2cm]

\end{center}

\begin{abstract}\noindent
This research reviews several crucial aspects of the universal model for the Lorenz curve proposed by Sitthiyot and Holasut (2023) (hereafter, SH (2023)). A first issue concerns the mathematical definition of the proposed curves. The four functional forms introduced by SH (2023) do not satisfy the necessary and sufficient conditions for a valid Lorenz curve. We propose corrected versions of the previous curves and derive analytical expressions for some measures of inequality. From a theoretical perspective, we rigorously demonstrate that a universal Lorenz curve cannot exist. Therefore, no single parametric form can adequately represent all possible size distributions. This conclusion is reinforced by recent large-scale empirical studies, based on thousands of income distributions in different countries and time periods, showing that different parametric models perform better in different contexts and that no single specification is consistently dominant. Regarding datasets containing zeros, we have shown that the presence of zero observations is a sample-dependent feature that can be accommodated by any genuine Lorenz curve through a standard transformation. Therefore, modeling zeros does not require a specific universal functional form. We have also corrected the erroneous claim made by SH (2023) that the Gini index of the SCS Lorenz curve of Sarabia et al. (1999) \cite{Sarabia1999} does not admit a closed-form expression. Finally, we re-examined the numerical experiments reported by SH (2023). More generally, when evaluating the evidence across all datasets and goodness-of-fit criteria, the proposed model does not consistently demonstrate superiority over existing four-parameter alternatives. Its apparent advantages are limited to the specific edge cases for which it was designed. Taken together, these findings indicate that the model proposed by SH (2023), although potentially useful for datasets with many zeros or extreme single-observation dominance, does not provide a universal solution to the Lorenz curve fitting problem.
\end{abstract}

\noindent {\bf Keywords}: Functional form, Gini index, genuine Lorenz curve

\section{Introduction}\label{s1}

The Lorenz curve and the Gini index are the most important instruments for the analysis of economic inequality \cite{Arnold2018}. The Lorenz curve graphs the cumulative income share $L(p)$ as a function of the cumulative population share $p$ when the income units are arranged according to income size. Using the well-known Gastwirth's definition \cite{Gastwirth1971}, for any cumulative distribution function $F_x(x)$ with support in a subset of the non-negative real numbers and finite and positive expectation $E(X)=\mu_X$, the Lorenz curve of $X$ is defined as,
\begin{equation}\label{basicdeflc}
L_X(u):=\frac{1}{\mu_X}\int_0^u F_X^{-1}(y)dy,
\end{equation}
with $0\le u\le 1$, where $F_X^{-1}(y) = \sup\{x:F_X(x) \leq y\}$, $0 \leq y < 1$, and $\sup\{x:F_X(x) < 1\}$, $y = 1$, is the right-continuous inverse cdf or quantile function. We consider the class of univariate cdfs with $F_X(0)=0$ and $\mathbb{E}(X)>0$, denoted as  ${\cal L}$.

Note that a Lorenz curve $L(p)$ is defined as a function $L:[0,1]\to [0,1]$ that satisfies $L(0)=0$, $L(1)=1$, and is increasing and convex. A characterization of the Lorenz curve attributed to Gaffney and Anstis by Pakes (1981) (see also Arnold and Sarabia, 2018) is given by the following theorem.
\begin{theorem}\label{Theorem1}
Suppose $L(p)$ is defined and continuous on $[0,1]$ with second derivative $L''(p)$. The function $L(p)$ is a Lorenz curve if and only if,
\begin{equation}\label{LCgenuine}
L(0) = 0,\;\;L(1) = 1,\;\;L'(0^+) \geq 0,\;\;L''(p) \geq 0, \quad p \in (0,1),
\end{equation}
\end{theorem}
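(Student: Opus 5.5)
The plan is to prove both directions directly from Gastwirth's definition (\ref{basicdeflc}). Throughout, ``$L$ is a Lorenz curve'' means $L=L_X$ for some $F_X\in{\cal L}$.

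\emph{Necessity.} Suppose $L=L_X$ with $F_X\in{\cal L}$.
\begin{itemize}
\item By definition $L(0)=0$, and $L(1)=\mu_X^{-1}\int_0^1F_X^{-1}(y)\,dy=1$, since $\int_0^1F_X^{-1}(y)\,dy=\mu_X$ (the quantile transform $F_X^{-1}(U)$, $U$ uniform on $(0,1)$, has law $F_X$).
\item The quantile function $F_X^{-1}$ is nondecreasing and nonnegative, because $F_X(0)=0$ forces the support into $[0,\infty)$.
\item Hence $L$ is an integral of a nondecreasing nonnegative function, so $L$ is convex with right derivative $L'(p^+)=F_X^{-1}(p)/\mu_X\ge 0$.
\item Under the stated smoothness, convexity of a twice differentiable function on $(0,1)$ gives $L''\ge 0$ there, and $L'(0^+)=F_X^{-1}(0^+)/\mu_X\ge 0$.
\end{itemize}

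\emph{Sufficiency.} This is the substantive direction. Given $L$ satisfying (\ref{LCgenuine}), I will construct a random variable whose Lorenz curve is $L$.
\begin{itemize}
\item Since $L''\ge0$ on $(0,1)$, the derivative $L'$ is nondecreasing on $(0,1)$. Together with $L'(0^+)\ge 0$, this gives $L'(p)\ge0$ for all $p\in(0,1)$.
\item Define $X=L'(U)$ with $U\sim\mathrm{Uniform}(0,1)$. Because $L'$ is nondecreasing, $X$ has quantile function $F_X^{-1}(y)=L'(y)$ at continuity points (after taking the right-continuous version, which changes nothing in the integrals). In particular $F_X(0^-)=0$.
\item Its mean is $\mu_X=\int_0^1 L'(y)\,dy=L(1)-L(0)=1$. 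The fundamental theorem of calculus applies because $L$ is continuous on $[0,1]$ and differentiable with integrable monotone derivative on $(0,1)$: for $0<a<u$, $\int_a^uL'=L(u)-L(a)$, and we let $a\downarrow0$ by monotone convergence and continuity of $L$ at $0$.
\item The same computation gives $L_X(u)=\int_0^uL'(y)\,dy=L(u)-L(0)=L(u)$, so $L$ is the Lorenz curve of $X$.
\end{itemize}

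\emph{Main obstacle.} The delicate points are at the endpoints. First, $L'$ may be unbounded near $1$, so integrability and $\mu_X<\infty$ must be obtained from the limit argument above rather than assumed. Second, the class ${\cal L}$ requires $F_X(0)=0$, whereas $X=L'(U)$ may have an atom at $0$ when $L'$ vanishes on an initial interval. If ${\cal L}$ is read strictly, the fix is to interpret $F_X(0)=0$ as ``support in $[0,\infty)$'', consistent with the paper's phrase ``support in a subset of the non-negative real numbers''. Positivity of the mean, $\mu_X=1>0$, holds automatically.
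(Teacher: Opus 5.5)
The paper does not prove Theorem~\ref{Theorem1}; it quotes the result from Pakes (1981) and Arnold and Sarabia (2018), so there is no in-paper proof to compare against. Your argument is the standard proof from that literature and it is correct: necessity follows from the monotonicity and nonnegativity of the quantile function, and sufficiency from realizing $L'$ as a quantile function through $X=L'(U)$. Two points should be tightened.

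\textbf{The mean computation.} Here the delicate endpoint is $1$, not $0$.
\begin{itemize}
\item Since $L'$ is nondecreasing and nonnegative, $0\le L'\le L'(1/2)$ on $(0,1/2]$, so the limit $a\downarrow 0$ is harmless. Near $1$, by contrast, $L'$ may be unbounded.
\item As written, you pass to the limit only at $0$ and list ``integrable'' among the premises, which is circular.
\item The fix is to apply the fundamental theorem of calculus on $[a,b]\subset(0,1)$, where $L'$ is continuous because it is differentiable there. Then let $a\downarrow0$ and $b\uparrow1$, using monotone convergence ($L'\ge 0$) and continuity of $L$ at both endpoints. This proves $L'\in L^1(0,1)$ with $\int_0^1 L'=1$ instead of assuming it.
\item Continuity of $L'$ on $(0,1)$ also removes your ``at continuity points'' caveat: $F_X^{-1}=L'$ on all of $(0,1)$.
\end{itemize}

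\textbf{Reading $F_X(0)=0$ as $F_X(0^-)=0$.} This reinterpretation is not just a convenient fix; it is forced, because under the strict reading the ``if'' direction is false.
\begin{itemize}
\item For $0<\delta<1$, the curve $L(p)=\bigl(\max\{p-\delta,0\}\bigr)^3/(1-\delta)^3$ is $C^2$ on $(0,1)$ and satisfies every condition in (\ref{LCgenuine}).
\item Yet $L\equiv 0$ on $[0,\delta]$ forces $F_X^{-1}=0$ on $(0,\delta)$ for any $X$ with $L_X=L$, that is, $F_X(0)\ge\delta$.
\item In general, $P(X=0)$ equals the length of the (possibly empty) initial interval on which $L'$ vanishes. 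So the strict class would need the extra condition $L'>0$ on $(0,1)$.
\item The loose reading is also the one the paper itself relies on, since it interprets $\delta$ in $L_1$--$L_3$ as the proportion of zeros.
\end{itemize}
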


Several parametric models have been proposed for the Lorenz curve. Among the most notable are those of Kakwani and Podder \cite{kakwani1973}, Rasche et al. \cite{rasche1980}, Pakes \cite{pakes1995}, Aggarwal and Singh \cite{aggarwal1984}, Arnold \cite{arnold1986}, Villase\~{n}or and Arnold \cite{villasenor1989}, Ortega et al.\cite{ortega1991}, Chotikapanich \cite{chotikapanich1993}, and Sarabia et al. \cite{sarabia2026}, among others.

More recently, in an article published in this journal, Sitthiyot and Holasut \cite{Sitthiyot2023} (hereafter, SH (2023)) proposed a universal model for the Lorenz curve, with applications to datasets containing zeros or exhibiting extreme inequality. In the present paper, we examine a number of relevant aspects of that contribution.

The remainder of the paper is organized as follows. In Section \ref{s2}, we show that the curves proposed by SH (2023) are not genuine Lorenz curves. We also propose new modified versions of these curves that satisfy the defining properties of a genuine Lorenz curve, and we derive several inequality measures, including the Gini index. In Section \ref{s3}, we examine the claim that a universal model for the Lorenz curve exists and refute it on several grounds. We also discuss the modeling of datasets containing zeros by means of Lorenz curves. Section \ref{s4} analyzes the Gini index of the Sarabia et al. \cite{Sarabia1999} Lorenz curve. Relevant aspects of the numerical experiments reported by SH (2023) are discussed in Section \ref{s5}. Finally, Section \ref{s6} presents the main conclusions.

\section{The SH (2023) curves and new proposals}\label{s2}

In this section, we show that the curves defined by Equations (1)–(4) in SH (2023) are not correctly specified and, in general, are not genuine Lorenz curves; a formal proof of this result is given below. They will be denoted by $y_k(x)$, for $k=1,2,3,4$, corresponding to Equations (1)–(4) in SH (2023).

As a preliminary step, we introduce the Lorenz curves proposed by Sarabia, which motivated the formulation in SH (2023). The family of curves proposed by SH (2023) is inspired by the parametric family of Lorenz curves introduced by Sarabia \cite{Sarabia1997}. In that paper, Sarabia proposed a new parametric family of Lorenz curves based on the generalized Tukey lambda distribution. This family of curves is presented in the Appendix. Sarabia's aim was to construct a hierarchical family of curves with an increasing number of parameters, including the egalitarian curve, the power Lorenz curve, and the Lorenz curve of the classical Pareto distribution. This idea was later adopted by SH (2023), with the addition of an extra parameter.

The curve $y_1(x)$ (linear function), corresponding to equation (1) in SH (2023), is defined as,
\begin{equation}\label{extcurvey1}
y_1(x)=
    \begin{dcases}
        0 & \text{if}\,\, x-\delta<0\iff x<\delta, \\
        \frac{2}{P+1}\left(\frac{x-\delta}{1-\delta}\right) & \text{if}\,\, 0\le x-\delta<1\iff \delta\le x<1+\delta,\\
        1 & \text{if}\,\, x=1, \\
    \end{dcases}
\end{equation}
where $0\le \delta<1$ and $P\ge 1$. The curve $y_2(x)$ (exponential function) is defined by
\begin{equation}\label{extcurvey2}
y_2(x)=
    \begin{dcases}
    0 & \text{if}\,\, x-\delta<0\iff x<\delta, \\
    \left(\frac{x-\delta}{1-\delta}\right)^P & \text{if}\,\, 0\le x-\delta<1\iff \delta\le x<1+\delta,\\
    \end{dcases}
\end{equation}
where $0\le \delta<1$ and $P\ge 1$. The functional form $y_3(x)$ is given by,
\begin{equation}\label{extcurvey3}
y_3(x)=
    \begin{dcases}
    0 & \text{if}\,\, x-\delta<0\iff x<\delta, \\
    1-\left(1-\left(\frac{x-\delta}{1-\delta}\right)\right)^{1/P} & \text{if}\,\, 0\le x-\delta<1\iff \delta\le x<1+\delta,\\
    \end{dcases}
\end{equation}
Finally, $y_4(x)$ is given by
\begin{eqnarray}\label{extcurvey4}
y_4(x) &=& (1-\rho)\frac{2}{P+1}\left(\frac{x-\delta}{1-\delta}\right)+\rho\left[(1-w)\left(\frac{x-\delta}{1-\delta}\right)^p\right.\nonumber\\
&&+
\left.w\left(1-\left(1-\left(\frac{x-\delta}{1-\delta}\right)\right)^{1/P}\right)\right].
\end{eqnarray}

\begin{proposition}\label{proposition1}
The curves (\ref{extcurvey1}) to (\ref{extcurvey4}) proposed by SH (2023) are not genuine LCs.
\end{proposition}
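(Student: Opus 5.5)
The plan is to test each curve against the characterization in Theorem \ref{Theorem1}, namely $L(0)=0$, $L(1)=1$, $L'(0^+)\ge 0$ and $L''\ge 0$ on $(0,1)$. It is enough to show that, for admissible parameter values (in particular for some $\delta\in(0,1)$ and $P>1$), each $y_k$ either is not defined as a map $[0,1]\to[0,1]$, or fails $L(1)=1$, or fails convexity on $(0,1)$. Much of the argument reduces to the one observation that, with $t=(x-\delta)/(1-\delta)$, the point $x=1$ corresponds to $t=1$ only when $\delta=0$. For $\delta>0$, $x=1$ corresponds to $t=(1-\delta)/(1-\delta)=1$; so I will first recheck this carefully. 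Since $t=1$ at $x=1$ for every $\delta$, the endpoint condition is not where things fail, and the real defects have to be found elsewhere.

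For $y_1$, evaluate the linear branch at $x\to 1^-$ to get $y_1(1^-)=2/(P+1)<1$ whenever $P>1$, while the piecewise definition forces $y_1(1)=1$. This jump is incompatible with continuity on $[0,1]$, and it also breaks the Lorenz property $L(1)=1$ for the continuous extension. Moreover, the domain statement $\delta\le x<1+\delta$ lets the curve run beyond $[0,1]$, so it is not a map $[0,1]\to[0,1]$ as stated. For $y_3$, the function $1-(1-t)^{1/P}$ with $P>1$ is concave in $t$, because its second derivative $-\frac{1}{P}\left(\frac{1}{P}-1\right)(1-t)^{1/P-2}$ is negative. This gives $y_3''<0$ on $(\delta,1)$ and violates $L''\ge 0$. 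For $y_2$, the function is convex on each branch, but the branches defined on $[0,1+\delta)$ exceed $1$ for $x\in(1,1+\delta)$, so the specification as written is not a function into $[0,1]$ on the stated domain. I will phrase that as the failure of the definition. If one restricts to $[0,1]$ it is convex with a kink at $\delta$, and the convexity of the kink needs checking: the slopes are $0$ on the left and $\ge 0$ on the right, so it is fine. So for $y_2$ the defect is only the ill-posed domain, together with the ambiguous parameter $p$ versus $P$ carried into $y_4$.

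For $y_4$, I will exhibit explicit parameters. Take $\rho=1$ and $w=1$, which reduces $y_4$ to $y_3$, already shown to be non-convex. Alternatively, take $\rho=0$, which reduces it to the linear piece with endpoint value $2/(P+1)\ne 1$. In addition, $y_4$ is written without the zero branch on $x<\delta$, so for $x<\delta$ we have $t<0$. Then $t^p$ is undefined for non-integer $p$, and the linear term is negative, giving $y_4(0)<0$ when $\delta>0$ and $\rho<1$, which violates $L(0)=0$ and $L\ge 0$.

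The main obstacle is that the statement is somewhat informal ("in general"), so the proof must be a collection of explicit counterexamples rather than a uniform argument. The hardest point is deciding, for $y_2$, exactly which condition fails, since on $[0,1]$ it is essentially a valid shifted power curve. I would rest that case on the domain and range misspecification, namely that the function is not defined as a map from $[0,1]$ to $[0,1]$. The cases $y_1$, $y_3$ and $y_4$ carry the substantive failures of $L(1)=1$ and $L''\ge 0$.
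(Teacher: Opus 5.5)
\textbf{The $y_3$ step is wrong, and it invalidates one of your $y_4$ counterexamples.} With $a=1/P\in(0,1)$, the function $f(t)=1-(1-t)^{a}$ has $f''(t)=a(1-a)(1-t)^{a-2}>0$. Your own expression $-\frac{1}{P}\bigl(\frac{1}{P}-1\bigr)(1-t)^{1/P-2}$ is exactly this quantity, and it is positive because $\frac{1}{P}-1<0$; you misread its sign. So $y_3$ is convex on $(\delta,1)$, not concave.

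In fact, restricted to $[0,1]$, $y_3$ is the corrected curve $L_3(x;\delta,1/P)$, the shifted Pareto Lorenz curve. It satisfies $y_3(0)=0$ and $y_3(1)=1$, and its kink at $\delta$ is convex (slopes $0$ and $\frac{1}{P(1-\delta)}$). No violation of $L''\ge 0$ exists there.

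The same error sinks your first counterexample for $y_4$: $\rho=w=1$ reduces $y_4$ to $y_3$, which is fine on $[0,1]$. Note also that with $\rho=1$ the endpoint constraint holds for every $P$, so nothing fails at $x=1$ either. Your closing summary, which assigns a convexity failure to $y_3$, is therefore incorrect.

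\textbf{The repair is to argue as the paper does.}

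For $y_3$, the defect is the same one you correctly found for $y_2$: the formula is declared on $\delta\le x<1+\delta$. For $x\in(1,1+\delta)$ one has $t>1$, so $1-t<0$, and $(1-t)^{1/P}$ is not even real for non-integer $1/P$. The curve is therefore not a map $[0,1]\to[0,1]$.

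For $y_4$, do not rely on special parameter choices; evaluate at the endpoint. You get $y_4(1)=(1-\rho)\frac{2}{P+1}+\rho$, which equals $1$ iff $(1-\rho)\frac{P-1}{P+1}=0$, that is, iff $\rho=1$ or $P=1$. This is the constraint (\ref{constraint2}) in the paper, and your $\rho=0$ case is one instance of it.

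Your remaining points agree with the paper or are valid additions:
\begin{itemize}
\item For $y_1$: the value $2/(P+1)$ at $x=1$ forces $P=1$.
\item For $y_2$: values exceed $1$ on $(1,1+\delta)$.
\item As an extra point not in the paper: (\ref{extcurvey4}) as displayed has no zero branch, so $y_4(0)<0$ when $\delta>0$ and $\rho<1$.
\end{itemize}

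Also clean up your opening paragraph. The assertion that $x=1$ gives $t=1$ only when $\delta=0$ is false, and you retract it yourself within the same paragraph. With these changes, the failures are the domain/range misspecification for all four curves and the failure of $L(1)=1$ for $y_1$ and $y_4$, not $L''\ge 0$.
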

\begin{proof}
The domain and the range of the curve $y_1(x)$ are not correctly defined since $x\in [0,1+\delta]$ with $0\le\delta<1$. On the other hand, $y_1(1)=\frac{2}{P+1}$ and since $y_1(1)=1$ (second condition in (\ref{LCgenuine})) we have $P=1$, that is, the parameter $P$ cannot be modeled.

The domain and range of the two curves $y_2(x)$ and $y_3(x)$ are not well defined since $x\in [0,1+\delta]$. Again, the domain of the curve $y_4(x)$ is incorrectly defined ($x\in [0,1+\delta]$). Likewise, $y_4(1)=(1-\rho)\frac{2}{P+1}+\rho$ and since $y_4(1)=1$, we have the constraint
\begin{equation}\label{constraint2}
\frac{2(1-\rho)}{P+1}+\rho=1,
\end{equation}
which should be added. The following extreme cases deserve attention, as they will be used later to discuss the numerical experiments of SH (2023). Using (\ref{constraint2}), if $\rho=1$, the parameter $P$ disappears in $y_4(x)$, and then $P$ cannot be modeled. On the other hand, if $\rho=0$ then $y_4(x)=y_1(x)$ and in consequence $P=1$.
\end{proof}\\
\subsection{New and genuine Lorenz curves}

In this section, we revisit the Lorenz curves proposed by SH (2023), given in (\ref{extcurvey1})--(\ref{extcurvey4}), and derive corrected versions of these models. We also obtain explicit expressions for several inequality indices. In light of the discussion in the previous section and by applying Theorem~\ref{Theorem1}, the corrected Lorenz curves are given by:
\begin{equation}\label{newL1}
L_1(x;\delta)=
    \begin{dcases}
        0 & \text{if } 0\le x\le \delta,\\
        \frac{x-\delta}{1-\delta} & \text{if } \delta\le x\le 1,
    \end{dcases}
\end{equation}
with $0\le x\le 1$ and $0\le\delta< 1$. The second curve, which is a power Lorenz curve, is defined by,
\begin{equation}\label{newL2}
L_2(x;\delta,\alpha_1)=
    \begin{dcases}
        0 & \text{if } 0\le x\le \delta,\\
        \left(\frac{x-\delta}{1-\delta}\right)^{\alpha_1} & \text{if } \delta\le x\le 1,
    \end{dcases}
\end{equation}
where $0\le x\le 1$, $\alpha_1\ge 1$, and the curve $L_3$ is given by,
\begin{equation}\label{newL3}
L_3(x;\delta,\alpha_2)=
    \begin{dcases}
        0 & 0\le x\leq \delta, \\
        1-\left(1-\frac{x-\delta}{1-\delta}\right)^{\alpha_2} & \delta\leq x\leq 1, \\
    \end{dcases}
\end{equation}
with $0\le x\le 1$, where $0< \alpha_2\le 1$. Finally, a corrected and more general version of (\ref{extcurvey4}) is given by the following convex combination:
\begin{equation}\label{newL4}
L_4(x;\delta,\alpha_1,\alpha_2)=\pi_1L_1(x;\delta)+\pi_2L_2(x;\delta,\alpha_1)+(1-\pi_1-\pi_2)L_3(x;\delta,\alpha_2),
\end{equation}
where $\pi_1,\pi_2>0$ and $\pi_1+\pi_2<1$.

Note that models (\ref{newL2})--(\ref{newL4}) are more general than those proposed by SH (2023) (after correction), since the parameters $\alpha_1$ and $\alpha_2$ are unrelated, and curve (\ref{newL4}) includes an additional shape parameter. If we take $\alpha_1=1/\alpha_2$ in (\ref{newL4}), we recover the original model (after correction) proposed by SH (2023). Figure \ref{Figure1} shows the Lorenz curves $L_1(x;\delta)$ (equation (\ref{newL1})) and $L_2(x;\delta,\alpha_1)$ (equation (\ref{newL2})) for selected parameter values. Figure \ref{Figure2} also shows the $L_3(x;\delta,\alpha_2)$ Lorenz curve (equation (\ref{newL3})) for some selected values of the parameters.

Note that the curve $L_2$ is ordered with respect to the Lorenz order as a monotone function of the parameters $\delta$ and $\alpha_1$. The same occurs with the curve $L_3$ with respect to the parameters $\delta$ and $\alpha_2$. A similar phenomenon occurs with the curve $L_4$.

\begin{figure}[ht]
\centering
\includegraphics[width=\linewidth]{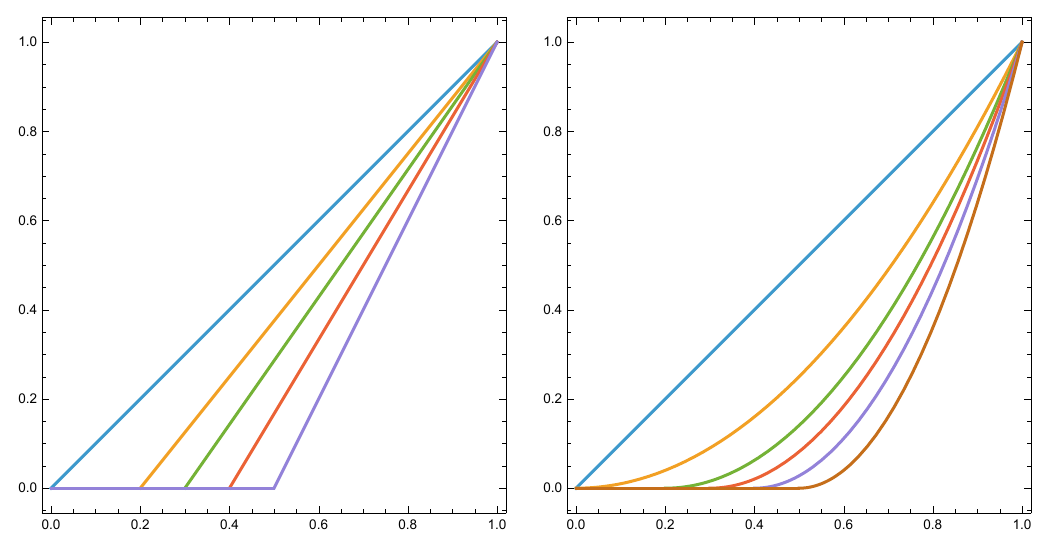}
\caption{Lorenz curves $L_1$ (left), equation (\ref{newL1}) for $\delta\in\{0.0, 0.2,0.3,0.4,0.5\}$ (left to right) and curve $L_2$ (right), equation (\ref{newL2}), for $\alpha_1=2$ and the same $\delta $ values and including $L(p)=p$.}
\label{Figure1}
\end{figure}

\begin{figure}[ht]
\centering
\includegraphics[width=\linewidth]{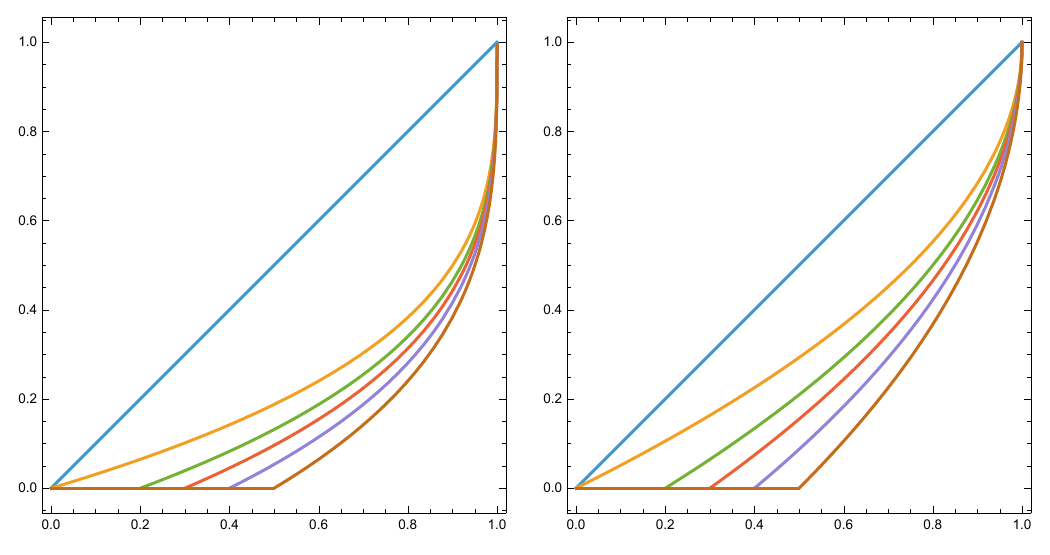}
\caption{Lorenz curves $L_3$ (equation (\ref{newL3})) for $\alpha_2=0.3$ (left) and $\alpha_2=0.5$ (right) for $\delta\in\{0.0, 0.2,0.3,0.4,0.5\}$ (left to right), including $L(p)=p$.}
\label{Figure2}
\end{figure}

\subsection{Inequality indices}

For the curves above, many inequality indices can be obtained in closed form. Except for the curve given in their equation (1), SH (2023) do not obtain these closed-form expressions.
First, the Gini index of the curve (\ref{newL1}) is,
\begin{equation*}
G_1=2\int_{0}^{1}[x-y_1(x)]dx=1-2\int_0^1y_1(x)dx=\delta.
\end{equation*}
An important property of the curves (\ref{newL1})--(\ref{newL3}) is that their corresponding Gini indices satisfy $\delta \leq G_i \leq 1$, $i=1,2,3$. Here, the parameter $\delta$ has a natural interpretation as the proportion of zero observations in the variable of interest. In this sense, $\delta$ reflects the structural presence of zeros in the distribution and determines the minimum level of inequality compatible with the model. Figure \ref{Figure3} shows the Gini indices of the $L_2$ and $L_3$ curves for some selected parameter values.

\begin{figure}[ht]
\centering
\includegraphics[width=\linewidth]{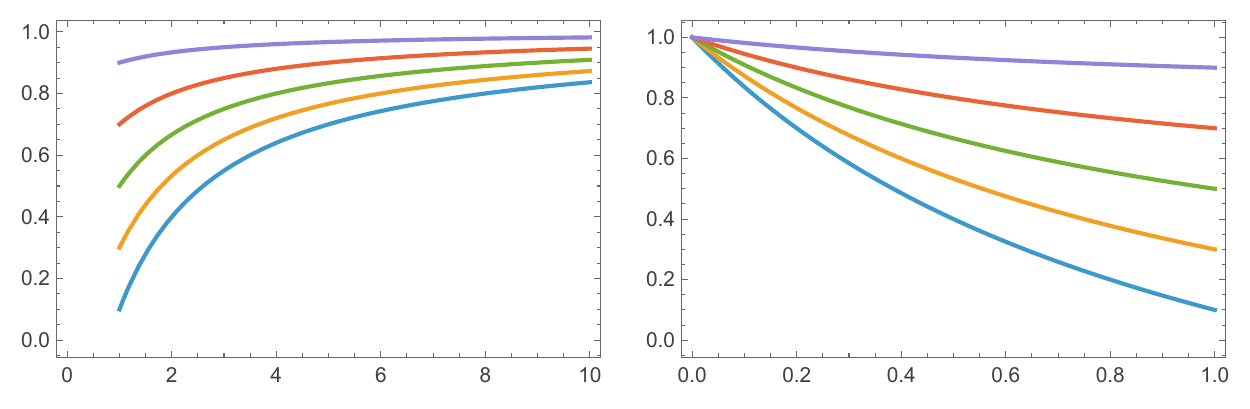}
\caption{Gini index of the curve $L_2(p;\delta,\alpha_1)$ as a function of $\alpha_1$, for $\delta\in\{0.1,0.3,0.5,0.7,0.9\}$ (left) and Gini index of the curve $L_3(p;\delta,\alpha_2)$ as a function of $\alpha_2$ for the same $\delta$ values (right)}
\label{Figure3}
\end{figure}

Finally, the Gini index of the curve $y_4(x)$ is a linear convex combination of the Gini indices and is given by,
\begin{equation*}
G_4=\pi_1\delta+\pi_2\frac{\alpha_1-1+2\delta}{1+\alpha_1}+(1-\pi_1-\pi_2)\frac{1-\alpha_2+2\alpha_2\delta}{1+\alpha_2}.
\end{equation*}

An important generalization of the Gini index was proposed by Donaldson and Weymark \cite{donaldson1980} and Kakwani \cite{kakwani1980}, and later studied in detail by Yitzhaki \cite{yitzhaki1983}. These authors introduced the generalized Gini index, defined as
\begin{equation}\label{DWKY}
G(\nu)=1-\nu(\nu+1)\int_0^1(1-p)^{\nu-1}y(p)\,dp,
\end{equation}
where $\nu>1$ and $y(\cdot)$ is a Lorenz curve. If we set $\nu=1$ in \eqref{DWKY}, we recover the standard Gini index. Moreover, as $\nu$ increases, greater weight is assigned to the lower tail of the distribution. In the limit, as $\nu \to \infty$, the index depends only on the minimum income, which is consistent with the Rawlsian criterion that social welfare depends exclusively on the poorest member of society. Table \ref{TableGenGini} includes the Gini and the DWK indices for the curves $L_1$, $L_2$, and $L_3$. The derivation of these expressions is direct.

\begin{table}[ht]
\centering
\begin{tabular}{|c|l|l|}
\hline
Lorenz curve & Gini index & Generalised Gini index \\
\hline
$L_1(x;\delta)$ & $G_1=\delta$ &$G_1(\nu)=1-(1-\delta)^\nu$ \\
$L_2(x;\delta,\alpha_1)$ & $G_2=\frac{\alpha_1+2\delta-1}{1+\alpha_1}$ &$G_2(\nu)=1-\frac{\nu(\nu+1)(1-\delta)^\nu\Gamma(\nu)(\alpha_1+1)}{\Gamma(1+\alpha_1+\nu)}$ \\
$L_3(x;\delta,\alpha_2)$ & $G_3=\frac{1-\alpha_2+2\alpha_2\delta}{1+\alpha_2}$ &$G_3(\nu)=1-\frac{(1+\nu)\alpha_2(1-\delta)^\nu}{\alpha_2+\nu}$ \\
\hline
\end{tabular}
\caption{Inequality measures for the curves $L_1$, $L_2$ and $L_3$}
\label{TableGenGini}
\end{table}

\section{Is there a universal model for representing a Lorenz curve?}\label{s3}

In this section, we answer the question of whether there is a universal model for representing a Lorenz curve. The answer to this question can be obtained through the following proposition.

\begin{proposition}
Let $X$ be a positive random variable with support on an interval of $\mathbb{R}^+$, strictly positive mean $\mu_X$ and cumulative distribution function $F_X(x)$. Then, the Lorenz curve $L_X(p)$ of $X$ determines the distribution function $F_X(x)$ up to a scale factor.
\end{proposition}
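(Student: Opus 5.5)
The plan is to recover the quantile function from the Lorenz curve by differentiation, and then invert it. By the definition (\ref{basicdeflc}), $L_X(u)=\mu_X^{-1}\int_0^u F_X^{-1}(y)\,dy$. The integrand $F_X^{-1}$ is nondecreasing, hence locally bounded on $[0,1)$ and continuous except at countably many points. By the Lebesgue differentiation theorem (or simply the fundamental theorem of calculus at continuity points), $L_X$ is differentiable almost everywhere on $(0,1)$, with
\begin{equation*}
L_X'(u)=\frac{F_X^{-1}(u)}{\mu_X}.
\end{equation*}
More precisely, since $L_X$ is convex, its right derivative $L_X'(u^+)$ exists at every $u\in(0,1)$. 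Because $F_X^{-1}$ is right-continuous and nondecreasing, this right derivative equals $F_X^{-1}(u)/\mu_X$ at every $u\in(0,1)$, not only almost everywhere. This is the first step. It shows that $L_X$ determines the function $u\mapsto F_X^{-1}(u)/\mu_X$ on $(0,1)$, which is the quantile function of the scaled variable $X/\mu_X$.

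The second step is to pass from the quantile function back to the cdf. A nondecreasing right-continuous quantile function determines the cdf uniquely, via
\begin{equation*}
F_X(x)=\sup\{u\in(0,1): F_X^{-1}(u)\le x\}
\end{equation*}
(or equivalently $F_X(x)=\lambda\{u: F_X^{-1}(u)\le x\}$). Hence $L_X$ determines the distribution of $Y=X/\mu_X$. Writing $X=\mu_X Y$ gives $F_X(x)=F_Y(x/\mu_X)$, so $F_X$ is determined up to the scale factor $\mu_X$.

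Conversely, I would note that $L_{cX}=L_X$ for every $c>0$, since both the quantile function and the mean scale by $c$. This shows the scale cannot be recovered, so ``up to a scale factor'' is sharp. Formally, I would state the result as: if $L_X=L_Z$ for two variables $X,Z$ in the class considered, then $Z\stackrel{d}{=}cX$ with $c=\mu_Z/\mu_X$.

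I expect the main obstacle to be the first step, because of the measure-theoretic care it needs. It requires identifying the one-sided derivative of $L_X$ with the right-continuous quantile at every point, rather than only almost everywhere, and handling the endpoints $u=0$ and $u=1$, where $F_X^{-1}$ may be $0$ or $+\infty$. I would handle this by observing that equality almost everywhere already suffices. Two right-continuous nondecreasing functions that agree almost everywhere on $(0,1)$ agree everywhere on $(0,1)$, and the endpoint values do not affect the distribution. The hypothesis that the support is an interval is not really needed for this argument. If desired, it simply makes $F_X^{-1}$ continuous and strictly increasing, so that $L_X$ is $C^1$ and $F_X=(\mu_X L_X')^{-1}$ holds in the classical sense.
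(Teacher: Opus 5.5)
Your proposal is correct and takes the same route as the paper: differentiate the Lorenz curve to recover the quantile function of $X/\mu_X$, then invert it. Your $L_X'(u)=F_X^{-1}(u)/\mu_X$ is the right formula (the paper's displayed $\mu_X F_X^{-1}(u)$ is a slip), and your treatment of right derivatives, a.e.\ equality and the explicit inversion supplies the rigor that the paper dismisses as ``obvious''.

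One minor caveat concerns your closing aside. An interval support makes $F_X^{-1}$ continuous on $(0,1)$, but not strictly increasing in general, since $X$ can still have atoms. So $F_X=(\mu_X L_X')^{-1}$ in the classical sense additionally requires $F_X$ to be continuous.
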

\begin{proof}
The Lorenz curve of a random variable $X$ is given by the equation (\ref{basicdeflc}). Calculating the first derivative, we have,
\begin{equation*}
L_X'(u)=\mu_XF_X^{-1}(u),\;\;0\le u\le 1,
\end{equation*}
and obviously $F_X^{-1}(u)$ determines $F_X(u)$, up to a scale factor, which proves the result.
\end{proof}

The previous result rigorously shows that it is not possible to obtain a universal Lorenz curve for all random variables, since each Lorenz curve is associated with a distribution function, up to a scale factor. The fact that Lorenz curves often display a similar overall shape does not imply that all types of data can be adequately represented by a single universal curve.

The proposal of SH (2023), if correctly specified as in (\ref{extcurvey1})--(\ref{extcurvey4}), may be interpreted as a flexible model that also allows for the modeling of datasets containing zeros. However, this does not imply that these curves constitute universal models. The presence of zeros in a dataset is a sample-specific feature that can, in principle, be accommodated within any parametric Lorenz curve as follows. If we have a dataset of size $n$, where $n_0$ are zeros, and $L(p)$ is the initial Lorenz curve, we can define the new Lorenz curve $L_0(p)$ as,
$$
L_0(p)=
\begin{dcases}
        0, & \text{if } 0\leq p\leq p_0,\\
        \dfrac{L(p)-L(p_0)}{1-L(p_0)}, & \text{if } p_0\leq p\leq 1,
\end{dcases}
$$
where $p_0=n_0/n$ is the proportion of zeros.

The absence of a universal functional form for the Lorenz curve can also be examined from an empirical perspective. Clearly, the answer depends to a large extent on the type of data considered and on the metric used for model comparison. In this regard, we rely on two recent studies of particular relevance.

In the first study, Blesch et al. \cite{Blesch2022} use a uniquely fine-grained dataset comprising $n=3,056$ U.S. county-level income distributions and estimate and compare a total of 17 parametric Lorenz curve models. To evaluate the competing specifications, the authors employ the Borda metric\cite{brams2002}, which ranks alternatives by assigning points according to their positions in the ordering and then summing these points across comparisons. The model with the highest Borda score is selected as the preferred specification. Among the 17 functional forms considered, the three best-performing models are the Lorenz curve proposed by Ortega, the Lorenz curve associated with the GB2 distribution, and, in third place, the Lorenz curve corresponding to the Dagum distribution.

The second study, by Jord\'a et al. \cite{Jorda2021}, also provides evidence against the existence of a universal Lorenz curve. Using more than 3,300 grouped income datasets covering over 180 countries during the period 1867--2015, the authors show that no single parametric form dominates across all income distributions. Their systematic comparison of distributions within the generalized beta distribution of the second kind (GB2) family---including the Singh--Maddala, Dagum, beta 2, lognormal, and Fisk distributions---reveals that the preferred model varies substantially across datasets. Although the GB2 distribution provides the best overall fit in the majority of cases according to information criteria, three-parameter submodels are preferred in approximately 22--37\% of the cases, and no single curve consistently outperforms the others in all empirical settings. This variability directly contradicts the premise of a universal functional form: if income data alone, even within a well-studied distributional family, require different specifications depending on the country and time period, then the claim that a single parametric curve can adequately represent all size distributions across disciplines is implausible.

\section{The Gini index of the SCS Lorenz curve}\label{s4}

The next class of Lorenz curve,
\begin{equation}\label{SCSLC}
L(p;\alpha,\beta,\gamma)=p^\gamma\left(1-(1-p)^\alpha\right)^\beta,\;\;0\le p\le 1,
\end{equation}
was proposed by Sarabia et al. \cite{Sarabia1999} in (1999). This curve will be called SCS LC. Several properties of this family, including Lorenz orderings, estimation, and calculation of inequality measures, were studied by these authors. On page 5 of SH (2023), the authors state: ``\textit{Note that the SCS model does not have an explicit mathematical solution for the Gini index}''. Likewise, on page 8, they write: ``\textit{the Gini index based on the SCS model does not exist\dots}''. However, these statements are incorrect. An explicit analytical expression for the Gini index is available (see \cite{Sarabia1999}, p.~57, formula (A5)),

\begin{equation}\label{GiniSCS}
G(\alpha,\beta,\gamma)=1-2\sum_{j=0}^{\infty}\frac{\Gamma(j-\beta)}{\Gamma(j+1)\Gamma(-\beta)}B(\gamma+1,\alpha j+1),
\end{equation}
where $B(x,y)=\frac{\Gamma(x)\Gamma(y)}{\Gamma(x+y)}$ is the usual beta function. In the special case $\gamma=0$ in (\ref{SCSLC}) we have the Lorenz curve,
$$
L(p;\alpha,\beta)=\left(1-(1-p)^\alpha\right)^\beta,\;\;0\le p\le 1,
$$
whose Gini index is,
\begin{equation}\label{GiniSCS0}
G(\alpha,\beta)=1-2\frac{\Gamma\left(1+\frac{1}{\alpha}\right)\Gamma(1+\beta)}{\Gamma\left(1+\frac{1}{\alpha}+\beta\right)}.
\end{equation}

Equations~(\ref{GiniSCS}) and~(\ref{GiniSCS0}) can be evaluated without substantial difficulty using either symbolic software, such as Mathematica or Maple, or numerical software, such as R or Python. Therefore, contrary to the claims made by the authors throughout the paper, the computation of the SCS Gini index does not appear to pose serious practical difficulties. This is also supported by Table~6 in SH (2023), where the reported values for the SCS model do not exhibit substantial differences from those obtained for the other models.

\section{The numerical experiments of SH}\label{s5}

The numerical results reported by SH (2023) offer a more nuanced picture than the authors suggest. When the proposed model is compared with the SCS model across all eight datasets and five goodness-of-fit criteria, the two models perform similarly overall, with neither showing a consistent advantage. When the proposed model is instead compared with the four-parameter model of Sarabia \cite{Sarabia1997}, which is a natural benchmark given the equal number of parameters, Sarabia's model performs better in the majority of cases according to the goodness-of-fit statistics considered. The clearest advantage of the models proposed by SH (2023) arises for datasets containing a large proportion of zeros and for the hypothetical dataset constructed to represent extreme single-observation dominance, which are precisely the settings for which the model was specifically designed. Outside these cases, the results do not indicate a systematic improvement over existing parametric forms.

\begin{proposition}
The six models proposed in Table 2 of SH (2023) are not valid.
\end{proposition}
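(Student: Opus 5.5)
The plan is to reduce the statement to Theorem~\ref{Theorem1} and to the computations already carried out in the proof of Proposition~\ref{proposition1}. The six models of Table~2 in SH (2023) are, as far as can be reconstructed from the paper, the curves $y_1,\dots,y_4$ together with the restricted versions of $y_4$ obtained by fixing $\rho$ or $w$ at boundary values ($\rho=0$, $\rho=1$, $w=0$, $w=1$). For each model I will exhibit at least one of the conditions in (\ref{LCgenuine}) that fails for generic parameter values. Equivalently, I will show that imposing the conditions collapses a free parameter, so the model as estimated is not a genuine Lorenz curve.

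The first step is to handle the domain and endpoint conditions, which apply uniformly. In every model the argument $(x-\delta)/(1-\delta)$ equals $1$ only at $x=1$, while the stated domain extends to $1+\delta$. Restricted to $[0,1]$, the endpoint condition $L(1)=1$ therefore reads $y(1)=1$.

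The second step checks this endpoint condition model by model.
\begin{itemize}
\item For the linear component, $y(1)=2/(P+1)$, which forces $P=1$.
\item For the full model and for any variant retaining the linear part with weight $1-\rho>0$, $y(1)=2(1-\rho)/(P+1)+\rho$. This equals $1$ only under the constraint (\ref{constraint2}), which SH (2023) do not impose when estimating.
\item If $\rho=1$, the constraint holds automatically, but $P$ then enters only through the $w$-mixture, and the linear term is vacuous.
\item If $\rho=0$, the model reduces to $y_1$, and $P=1$ again.
\end{itemize}
For variants where $L(1)=1$ holds, such as pure $y_2$ or $y_3$ after restricting the domain, I will instead check convexity. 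Convexity holds for $y_2$ only when $P\ge1$, and for $y_3$ only when the exponent satisfies $1/P\le1$, that is, $P\ge1$. Wherever Table~2 allows $P<1$, or lets $P$ be fitted freely, $L''\ge0$ fails on $(\delta,1)$.

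The third step is to verify the condition $L'(0^+)\ge0$ and continuity at $x=\delta$. These hold trivially, since all pieces vanish at $\delta$.

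I expect the main obstacle to be identifying precisely the six specifications and the parameter ranges SH (2023) use in Table~2, since the argument depends on them. The most delicate case is a mixed model in which the $\rho$-weighted combination could accidentally satisfy $y(1)=1$. There I will argue that, unless (\ref{constraint2}) is imposed, the estimated parameters generically violate it. I will then point to the fitted values in their tables, where $\rho<1$ and $P\neq1$ give $y(1)\ne1$. This exhibits an explicit violation and completes the proof.
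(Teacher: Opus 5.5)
Your guess about what Table~2 of SH (2023) contains is off, and it hides the case that carries most of the statement. In the paper's proof, Table~2 lists \emph{fitted} instances of the four-parameter curve $y_4$ (Models 1--8). Invalidity is read off the reported estimates using Proposition~\ref{proposition1}: Models 1, 2, 3, 5, 6 and 7 have $\rho=1$, Model 8 has $\rho=0$, and Model 4 has $\rho=0.88$, $P=13.42$. Two of your steps agree with this. First, $\rho=0$ reduces $y_4$ to $y_1$ and forces $P=1$ (Model 8). Second, your final idea of checking the fitted values is exactly the Model 4 argument, since $y_4(1)=0.24/14.42+0.88\approx 0.897\neq 1$ violates (\ref{constraint2}). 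Arguments about ``generic parameter values'' or about what the table ``allows'', however, cannot show that a particular fitted curve is invalid.

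The genuine gap is the case $\rho=1$, which covers six of the eight fitted models. Your strategy is to exhibit, for each model, a violated condition of Theorem~\ref{Theorem1}. At $\rho=1$ the endpoint condition holds automatically, as you note yourself. After your restriction to $[0,1]$, the curve equals $0$ on $[0,\delta]$ and $(1-w)z^{P}+w\bigl(1-(1-z)^{1/P}\bigr)$, with $z=(x-\delta)/(1-\delta)$, on $[\delta,1]$. For $0\le w\le 1$ and $P\ge 1$, as in SH's specification, this is continuous, nondecreasing and convex, so nothing in (\ref{LCgenuine}) fails. Your observation that ``the linear term is vacuous'' is never turned into a conclusion. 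The convexity check would only bite if some fitted $P$ were below $1$, which you have no grounds to assume.

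The paper's argument for these six models is of a different kind. By Proposition~\ref{proposition1}, at $\rho=1$ the term $(1-\rho)\frac{2}{P+1}z$ vanishes and (\ref{constraint2}) no longer involves $P$, so $P$ ``disappears'' and cannot be modeled. The models are declared invalid as degenerate specifications, not because they violate (\ref{LCgenuine}). You need to adopt that criterion for the $\rho=1$ cases; checking the conditions of Theorem~\ref{Theorem1} alone will not prove the statement.
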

\begin{proof}
This result follows from Proposition \ref{proposition1}. First, if $\rho=1$, the parameter $P$ disappears; consequently, Models 1, 2, 3, 5, 6, and 7 are not valid. If $\rho=0$ (Model 8), then $P=1$, and Model 8 is not valid. Finally, for Model 4, if $\rho=0.88$ and $P=13.42$, then constraint (\ref{constraint2}) is not satisfied.
\end{proof}

\section{Conclusions}\label{s6}
This article has reviewed several aspects of the universal model for the Lorenz curve proposed by SH (2023). Our analysis has identified a number of issues with the original proposal and has provided several clarifications, corrections, and extensions of the proposed curves and their associated inequality measures.

A first observation concerns the mathematical definition of the proposed curves. The four functional forms introduced by SH (2023) do not satisfy the necessary and sufficient conditions for a valid Lorenz curve. We have proposed corrected versions, denoted by $L_1$, $L_2$, and $L_3$, together with their convex combination $L_4$, and have derived closed-form expressions for the Gini index and the generalized Donaldson--Weymark--Kakwani index. The corrected model $L_4$ contains the original proposal as a special case and allows for a wider range of distributional shapes.

From a theoretical perspective, we have shown rigorously that a universal Lorenz curve cannot exist. Since the Lorenz curve of a random variable determines its distribution function up to a scale factor, different data-generating processes necessarily produce different Lorenz curves. Hence, no single parametric form can adequately represent all possible size distributions. This conclusion is reinforced by recent large-scale empirical studies based on thousands of income distributions across countries and time periods, which show that different parametric models perform best in different settings and that no single specification consistently dominates.

With regard to datasets containing zeros, we have shown that the presence of zero observations is a sample-dependent feature that can be accommodated by any genuine Lorenz curve through a standard transformation. Therefore, the modeling of zeros does not require a dedicated universal functional form.

We have also corrected the erroneous claim made by SH (2023) that the Gini index of the SCS Lorenz curve of Sarabia et al. \cite{Sarabia1999} does not admit a closed-form expression. In fact, an explicit analytical formula exists and can be evaluated without difficulty using standard symbolic or numerical software.

Finally, we have re-examined the numerical experiments reported by SH (2023). Several of the datasets considered are known to follow power laws, in which case the appropriate Lorenz curve is the Pareto curve, rendering comparisons with other models unnecessary for those examples. More generally, when the evidence is assessed across all datasets and goodness-of-fit criteria, the proposed model does not exhibit consistent superiority over existing four-parameter alternatives. Its apparent advantages are confined to the specific boundary cases for which it was designed. Taken together, these findings indicate that the model proposed by SH (2023), although potentially useful for datasets with many zeros or extreme single-observation dominance, does not provide a universal solution to the problem of fitting Lorenz curves.

\section*{Acknowledgements}

\subsection*{Author contributions}
All authors contributed equally to this work.

\subsection*{Funding}
JMS, VJ and MT acknowledge financial support from the I+D+i project Ref. PID2024-156871NB-I00 financed by MICIU/AEI/10.13039/501100011033/FEDER, UE. EGD was partially funded by grant PID2021-127989OB-I00 (Agencia Estatal de Investigación, Ministerio de Ciencia e Innovación, Spain).

\subsection*{Data availability}
This study did not use any dataset, as the results are entirely theoretical.

\subsection*{Competing interests}

The authors declare no competing interests.

\section*{Appendix}

Sarabia's (1997) family of Lorenz curves is given by,
\begin{eqnarray*}
  L_0(p) &=& p, \\
  L_1(p;\alpha_1) &=& \pi_1p+(1-\pi_1)p^{\alpha_1}, \\
  L_2(p;\alpha_2) &=& \pi_1p+(1-\pi_1)(1-(1-p)^{\alpha_2}), \\
  L_3(p;\alpha_1,\alpha_2)&=&\pi_1p+\pi_2p^{\alpha_1}+(1-\pi_1-\pi_2)(1-(1-p)^{\alpha_2}),
\end{eqnarray*}
where $\alpha_1 \geq 1$, $0 < \alpha_2 \leq 1$, and $\pi_i$ are non-negative weights that sum to one. The main contribution is the introduction of a methodology for constructing families of Lorenz curves with an increasing number of parameters, which allows classical curves (such as the Pareto curve) to be interpreted as special or limiting cases. The Tukey distribution is a well-known and highly flexible family of distributions, and it provides a better representation of empirical income and wealth data, especially with regard to tail behavior and when prior information about the data is unavailable. In addition, the paper derives closed-form expressions for widely used inequality measures, such as the Gini and Pietra indices, which facilitates their computation from the model parameters.


\end{document}